\documentclass[11pt]{amsart}
\usepackage{amssymb,amsmath,latexsym,enumerate, dsfont}
\usepackage{graphicx,verbatim,enumerate,%
	ifpdf,mdwlist}
\usepackage{mathabx}
\usepackage{mathtools}
\usepackage{color}
\ifpdf
\usepackage{hyperref}
\else
\usepackage[hypertex]{hyperref}
\fi
\usepackage{graphicx}
\usepackage{cancel}
\usepackage{float}
\usepackage{enumitem}
\renewcommand{\phi}{\varphi}
\newtheorem{theorem}{Theorem}[section]

\newtheorem{lemma}[theorem]{Lemma}

\newtheorem{corollary}[theorem]{Corollary}
\newtheorem{proposition}[theorem]{Proposition}

\newtheorem{defi}[theorem]{Definition}
\newenvironment{definition}{\begin{defi} \rm}{ \end{defi}}
\newtheorem{exa}[theorem]{Example}

\newtheorem{notice1}{Observation}
\newtheorem{rem}[theorem]{Remark}
\newenvironment{remark}{\begin{rem} \rm}{ \end{rem}}

\DeclareMathOperator{\dom}{domain}

\newcommand{\QA}[1]{(\forall{#1})\,}

\title[Conjunctive reducibilities and completeness]
{Conjunctive reducibilities and completeness}

\author[I.~Chitaia]{Irakli Chitaia}
\address{Department of Mathematics \\
	Ivane Javakhishvili Tbilisi State University\\ Tbilisi 0186, Georgia}
\email{\href{mailto:i.chitaia@gmail.com}{i.chitaia@gmail.com}}

\author[K.M.~Ng]{Keng Meng Ng}
\address{Division of Mathematical Sciences, School of Physical \& Mathematical
	Sciences, College of Science\\
	Nanyang Technological University\\
	Singapore}
\email{\href{mailto:kmng@ntu.edu.sg}{kmng@ntu.edu.sg}}

\author[R.~Omanadze]{Roland Omanadze}
\address{Department of Mathematics \\
	Ivane Javakhishvili Tbilisi State University\\ Tbilisi 0186, Georgia}
\email{\href{mailto:roland.omanadze@tsu.ge}{roland.omanadze@tsu.ge}}

\author[A.~Sorbi]{Andrea Sorbi}
\address{Dipartimento di Ingegneria Informatica e Scienze Matematiche\\
	Universit\`a Degli Studi di Siena\\
	I-53100 Siena, Italy}
\email{\href{mailto:andrea.sorbi@unisi.it}{andrea.sorbi@unisi.it}
}\thanks{The fourth author is a member of GNSAGA}

\keywords{$c$-reducibility, $c_1$-reducibility, $c_{1,N}$-reducibility.}

\subjclass[2010]{03D25, 03D30}

\begin{document}
\maketitle 

\begin{abstract}
In this article we study the notion of completeness for conjunctive 
reducibilities. We investigate the relationship between $c$-completeness 
and $r$-completeness of computably enumerable (c.e.) sets with respect to 
various strong reducibilities $\le_r$. By using simplicity properties of 
sets, we prove that there exist c.e.\ sets that are simultaneously 
$Q$-complete and $bd$-complete, yet fail to be $c$-complete. Similarly, 
there exist c.e.\ sets that are	simultaneously $Q$-complete and 
$bwtt$-complete (respectively, $btt$-complete)  but not $c$-complete. 
Furthermore, we study two restrictions of $c$-reducibility, namely $c_1$- 
and $c_{1,N}$-reducibility, and show that they are distinct on the c.e.\ 
sets. Nevertheless, we prove that the notions of completeness for $c$, 
$c_1$, and $c_{1,N}$ coincide. 
\end{abstract}

\section{Introduction}

In this paper we study conjunctive reducibility ($c$-reducibility), first 
proposed by Jockusch~\cite{Jockusch1969}, and its subreducibility (known as 
$c_{1,N}$-reducibility), suggested by Bulitko~\cite{Bulitko1992}. Conjunctive 
reducibility is a \emph{positive reducibility}, i.e., a notion of 
computability relative to an external oracle in which the computing agent 
makes use only of positive information about the oracle. In this sense, it is 
a restricted version of enumeration reducibility, although it provides a 
degree structure whose least degree consists exactly of the decidable sets. 
This marks a fundamental difference from the most commonly studied positive 
reducibilities, whose least degree consists instead of the computably 
enumerable (c.e.) sets. 
 
Formally, a set $A$ is \emph{$c$-reducible} to a set $B$ (denoted $A  \le_c 
B$) if there exists a computable function $f$ such that for all $x, y \in 
\omega$: 
\[
x \in A \iff D_{f(x)} \subseteq B:
\]
we say in this case that $f$ is a \emph{$c$-reduction from $A$ to $B$}. 
(Throughout the paper the symbol $\omega$ denotes the set of natural numbers, 
and the symbol $D_u$ denotes the finite set with canonical index $u$.) 
Interest in studying the algebraic structure of the $c$-degrees increased in 
the wake of unexpected results by Dobritsa \cite{Dobritsa} and Belegradek 
\cite{Belegradek}, which revealed that structural properties of the 
$c$-degrees correspond to properties of classes of finitely generated 
subgroups of algebraically closed groups. Degtev \cite{Degtev} proved that 
the upper semilattice $R_c$ of the c.e. $c$-degrees is not distributive and 
is not elementarily equivalent to the structure $R_r$ of the c.e. 
$r$-degrees, with $r\in \{tt, l, p, d\}$ (cf. \cite{Degtev}, to which the 
reader is referred also for the meaning of the acronyms). Moreover, 
Degtev~\cite{Degtev:comparison} proved that every maximal set has minimal 
$c$-degree.  If, in addition, a $c$-reduction $f$ from $A$ to $B$ satisfies 
\[
x \neq y \implies D_{f(x)} \cap D_{f(y)} = \emptyset
\] 
then we say that \emph{$A$ is $c_1$-reducible to $B$} (notation $A \leq_{c_1} 
B$) and $f$ is a \emph{$c_1$-reduction from $A$ to $B$}. This reducibility 
can naturally be regarded as the injective version of $c$-reducibility. The 
current state of knowledge about conjunctive reducibilities, and in 
particular the algebraic structures of $c$- and $c_1$-c.e.\ degrees, can be 
found in~\cite{ChitaiaOmanadze2022, ChitaiaOmanadzeSorbi2021, 
ChitaiaOmanadzeSorbi2025}. 

A further restriction of $c$-reducibility, more precisely of 
$c_1$-reducibility, is defined as follows. A set $A$ is 
\emph{$c_{1,N}$-reducible} to a set $B$ (denoted $A  \le_{c_{1,N}} B$) if 
there is a $c_1$-reduction $f$ from $A$ to $B$ such that the set 
\[ 
R_f=\bigcup_{x \in \omega} D_{f(x)}
\]
is computable. In this case we say that $f$ is a \emph{$c_{1,N}$-reduction 
from $A$ to $B$}. It is obvious that $\leq_{1,N} \subseteq \leq_{c_1}$. This 
restriction on $c_1$-reducibility, according to which only those 
$c_1$-reductions whose set of all possible queries is decidable are allowed 
to perform oracle computations, is in line with similar restrictions proposed 
for some classical reducibilities in Rogers' 
book~\cite[Exercise~7.34]{Rogers1987}. 

This article is devoted to the study of completeness notions for conjunctive 
reducibilities. Section~2 contains some remarks on the relationships between 
$c$-completeness and $r$-completeness of c.e.\ sets, where $\leq_r$ ranges 
over various strong reducibilities. In particular, by using simplicity 
properties of sets, we show (Theorem~\ref{theorem-1.6}) that there exists a 
c.e.\ set which is simultaneously $Q$-complete and $bd$-complete, but not 
$c$-complete; moreover, Corollary~\ref{cor:relationsh} shows that there 
exists a c.e.\ set which is simultaneously $Q$-complete and $bwtt$- ($btt$-) 
complete but not $c$-complete. In Section~3, we prove that on c.e.\ sets 
$c_{1,N}$-reducibility is strictly stronger than $c_1$-reducibility. Finally, 
in Section~4 we show that the notions of $c$-, $c_1$-, and 
$c_{1,N}$-completeness coincide. 

Unless otherwise specified, our notations and terminology concerning 
computability theory are standard and can be found in the classical textbooks 
\cite{Rogers1987} and \cite{Soare:Book}. The complement $\omega 
\smallsetminus X$ of a set $X \subseteq \omega$ will be denoted by 
$\overline{X}$, and its cardinality by $\left|X\right|$. For a given 
reducibility $r$, and a c.e. set $A$, we say that $A$ is $r$-complete if 
$B\leq_r A$ for every c.e. set $B$. 

\section{Relationships between $c$-, $Q$- and $bd$-complete sets }

In this section, we prove that there exists a c.e.\ set which is 
simultaneously $Q$-complete and $bd$-complete, but not $c$-complete. Nowhere 
simple sets, introduced by Shore~\cite{Shore1978}, are an important tool for 
the investigation carried out in this section. 

\begin{definition}
A set $A$ is \emph{nowhere simple} if for every c.e.\ set $B$ with $B 
\setminus A$ infinite, there is an infinite c.e.\ set $W \subseteq B 
\setminus A$.
\end{definition} 

Shore \cite[Theorem~4]{Shore1978} proved that every c.e. degree contains a 
nowhere simple set.

The following theorem, which is of independent interest, is useful for 
proving the main result of the section. 

\begin{theorem}\label{theorem-1.1}
Let $A$ be a simple set, $B$ be an arbitrary set, $C$ be a nowhere simple set 
and 
\[ 
A \leq_c B \oplus C. 
\]
Then $A \leq_c B$.
\end{theorem}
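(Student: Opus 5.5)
The plan is to show that the $C$-part of any $c$-reduction from $A$ to $B \oplus C$ can only ever query finitely many elements outside $C$. Once that is established, that finite information can be hard-wired into a $c$-reduction to $B$ alone.

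Fix a $c$-reduction $f$ from $A$ to $B \oplus C$. Splitting each $D_{f(x)}$ into its even and odd parts, we uniformly compute finite sets $E_x, F_x$ with $D_{f(x)} = \{2y : y \in E_x\} \cup \{2z+1 : z \in F_x\}$. Then
\[
x \in A \iff E_x \subseteq B \text{ and } F_x \subseteq C .
\]
Let $Q = \bigcup_{x} F_x$, which is a c.e.\ set. The key claim is that $Q \smallsetminus C$ is finite.

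To prove the claim, suppose otherwise. Since $C$ is nowhere simple, there is an infinite c.e.\ set $W \subseteq Q \smallsetminus C$. Consider
\[
X = \{x : F_x \cap W \neq \emptyset\},
\]
which is c.e. If $x \in X$, then $F_x \not\subseteq C$, so $x \notin A$; hence $X \subseteq \overline{A}$. Since $A$ is simple, $X$ must be finite. On the other hand, every element of $W$ lies in $Q$, and hence in some $F_x$ with $x \in X$. So $W \subseteq \bigcup_{x \in X} F_x$, which is a finite union of finite sets. This contradicts $W$ being infinite. I expect this to be the only step that really uses the hypotheses; it is also the main point to get right, namely applying nowhere simplicity to $Q$ rather than to some non-c.e.\ set.

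Now let $K = Q \smallsetminus C$, a finite set. For every $x$ we have $F_x \subseteq Q$, so $F_x \subseteq C \iff F_x \cap K = \emptyset$. This condition is decidable uniformly in $x$, using $K$ as a fixed finite parameter; the reduction is non-uniform, but that is allowed. Next observe that $B \neq \omega$. Otherwise $x \in A \iff F_x \cap K = \emptyset$, which would make $A$ computable, and a simple set is not computable. So fix some $b \notin B$. Define a computable $g$ by letting $D_{g(x)} = E_x$ if $F_x \cap K = \emptyset$, and $D_{g(x)} = \{b\}$ otherwise. In the first case, $x \in A \iff E_x \subseteq B$. In the second case, $x \notin A$, and indeed $\{b\} \not\subseteq B$. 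Hence $g$ is a $c$-reduction from $A$ to $B$.
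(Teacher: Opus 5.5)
Your proof is correct and is essentially the paper's argument: your claim that $Q \smallsetminus C$ is finite is Lemma~\ref{lemma-1.2}(1) applied with $W=\omega$, proved by the same combination of nowhere simplicity and simplicity, and your $g$ is the paper's $f_2$, with $K$ in the role of $R$. You also justify that $B\neq\omega$, so that some $b\notin B$ exists; the paper uses this without comment.
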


\begin{proof}
We first prove the following lemma, along the lines of a corresponding result 
for $Q$-reducibility, proved in \cite[Lemma~3.3]{Omanadze2019}. 
	
\begin{lemma}\label{lemma-1.2}
Let $A$ be a simple set, $B$ be an arbitrary set, $C$ be a nowhere simple set 
and $A \leq_c B \oplus C$ via $f$. As a consequence, for every c.e.\ set $W$ 
for which $W \setminus A$ is infinite, we have: 
\begin{enumerate}
\item $\left| \{ 2x+1 : 2x+1 \in \bigcup_{y \in W} D_{f(y)} \} \cap 
    \overline{B \oplus C} \right| < \infty,$ 
\item $\left| \{ 2x : 2x \in \bigcup_{y \in W} D_{f(y)} \} 
\cap \overline{B 
    \oplus C} \right| = \infty.$ 
\end{enumerate}
\end{lemma}

\begin{proof}[Proof of Lemma~\ref{lemma-1.2}]
First of all, notice that Claim~(2) obviously follows from Claim~(1). So, it 
is enough to prove Claim~(1). Let $U=\bigcup_{y \in W} D_{f(y)}$, and let $V= 
U \cap (2\omega+1)$. Throughout the proof of this lemma, for any set of 
numbers $X$, let $X^-=\{x: 2x+1 \in X\}$ and $X^+=\{2x+1: x \in X\}$. 
Assuming that the conditions of the lemma are satisfied, we first show that 
\[ 
\left|U \cap \overline{B \oplus C}\right| = \infty. 
\]
Indeed, if this set were finite, then the set
\[ 
\widetilde{W} = \left\{ x : D_{f(x)} \cap \left(U \cap 
\overline{B \oplus C} \right) \neq \varnothing \right\} 
\]
would be c.e.\ and $W \setminus A \subseteq \widetilde{W}$ while 
$\widetilde{W} \cap A = \varnothing$, providing an infinite c.e.\ subset of 
$\overline{A}$, which contradicts the simplicity of $A$. 

Assume that $\left|V\setminus B\oplus C\right|=\infty.$ Then $\left|V^- 
\setminus C\right| = \infty$. Since $C$ is nowhere simple, there exists an 
infinite c.e.\ set $V_1$ such that 
$$
V_1 \subseteq V^- \setminus C.
$$ 
Then  $V_1^+$ is infinite,  $V_1^+ \subseteq \overline{B \oplus C}$ and 
$V_1^+ \subseteq V$. 

Consider 
$$
V_2 = \{ x : D_{f(x)} \cap V_1^+ \neq \varnothing \}. 
$$ 
Then $V_2$ is a c.e. set such that $V_2 \subseteq \overline{A}$ (as $x \in A$ 
implies $D_{f{(x)}} \subseteq 2\omega$). Moreover, $V_2$ is infinite since 
$V_1^+ \subseteq V$ is infinite and one need infinitely many $x$ such that 
$\bigcup_x D_{f(x)}$ covers $V_1^+$. Such a c.e. set contradicts the 
assumption that $A$ is simple.
\end{proof}

Let us now go back to the proof of Theorem~\ref{theorem-1.1}. Recall that $f$ 
is a computable function such that 
$$
\QA{x}\left[ x \in A \iff D_{f(x)} \subseteq B \oplus C\right]. 
$$ 
		Let 
$$
R = (2\omega +1) \cap \left( \bigcup_{y \in \omega} 
D_{f(y)}\right) \cap \overline{B \oplus  C}. 
$$ 
By (1) of Lemma~\ref{lemma-1.2}, $\left|R\right| < \infty$. Let 
\begin{align*}
D_{f_1(x)} &= \{ y : 2y \in D_{f(x)} \}\\
R_1 &= \{ x : D_{f(x)} \cap R \neq \varnothing \}.
\end{align*}

Since $R_1$ is an infinite c.e. subset of $\overline{A}$, by simplicity of 
$A$ we have that $\left|R_1\right| < \infty$. Let $f_2$ be a computable 
function such that 
\[
D_{f_2(x)} = \begin{cases} 
D_{f_1(x)}, & \text{if } x \notin R_1 \\ 
\{a\}, & \text{if } x \in R_1, 
\end{cases}
\]
where $a$ is a fixed element of $\overline{B}$.

If $x \in A$, then $D_{f(x)} \subseteq B \oplus C$. Since $A \cap R_1 = 
\varnothing$, $D_{f_1(x)} \subseteq B$. If $x \notin A$, then either $x \in 
R_1$ (so $D_{f_2(x)} = \{a\} \nsubseteq B$) or $x \notin R_1$ (so $D_{f(x)} 
\nsubseteq B \oplus C$ being $x \notin A$, and $D_{f(x)} \cap R = 
\varnothing$, implying $D_{f_2(x)}=D_{f_1(x)} \nsubseteq B$). In all cases, 
\[
\QA{x}\left[ x \in A \iff D_{f_2(x)} \subseteq B\right].
\]
Thus $A \le_c B$.
\end{proof}

For the following lemma we need the following definition. 

\begin{definition}\label{def:Q-Q1}
A set $A$ is said to be \emph{$Q$-reducible to a set $B$} (in symbols: $A 
\le_Q B$) if there exist a computable function $f$ such that for all $x$: 
\[ x \in A \iff W_{f(x)} \subseteq B. \]
If in addition $f$ satisfies $W_{f(x)} \cap W_{f(y)}=\emptyset$ whenever 
$x\ne y$, then we say that \emph{$A$ is $Q_1$-reducible to $B$} (notation $A 
\leq_{Q_1} B$). 
\end{definition} 
	
\begin{lemma}\label{lemma-1.3}
Let $A_0$ and $A_1$ be c.e.\ sets. Then $A_0 \oplus A_1$ is a nowhere simple 
set if and only if $A_0$ and $A_1$ both are nowhere simple. 
\end{lemma}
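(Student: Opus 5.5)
Both directions will be handled by working directly with the definition of nowhere simplicity, using the coding $X \oplus Y = \{2x : x\in X\}\cup\{2x+1 : x \in Y\}$. The maps $x\mapsto 2x$ and $x\mapsto 2x+1$ are computable bijections onto the evens and odds, and they carry c.e.\ sets to c.e.\ sets in both directions.

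\emph{($\Rightarrow$)} Assume $A_0\oplus A_1$ is nowhere simple, and let $B$ be c.e.\ with $B\setminus A_0$ infinite. Then $2B=\{2x:x\in B\}$ is c.e., and $2B\setminus(A_0\oplus A_1)=2(B\setminus A_0)$ is infinite. By hypothesis there is an infinite c.e.\ set $W\subseteq 2B\setminus(A_0\oplus A_1)$. The set $W$ consists of even numbers, so $W'=\{x:2x\in W\}$ is an infinite c.e.\ subset of $B\setminus A_0$. The argument for $A_1$ is the same, with $2x+1$ in place of $2x$.

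\emph{($\Leftarrow$)} Assume $A_0$ and $A_1$ are both nowhere simple, and let $B$ be c.e.\ with $B\setminus(A_0\oplus A_1)$ infinite. Split $B$ into $B_0=\{x:2x\in B\}$ and $B_1=\{x:2x+1\in B\}$, both c.e. Then
\[
B\setminus(A_0\oplus A_1)=2(B_0\setminus A_0)\cup\bigl(2(B_1\setminus A_1)+1\bigr),
\]
so at least one of $B_0\setminus A_0$ and $B_1\setminus A_1$ is infinite; say $B_i\setminus A_i$ is. Nowhere simplicity of $A_i$ then gives an infinite c.e.\ $W\subseteq B_i\setminus A_i$. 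Its image $\{2x+i:x\in W\}$ is an infinite c.e.\ subset of $B\setminus(A_0\oplus A_1)$.

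I expect no real obstacle. The one step that needs care is the case split in ($\Leftarrow$): it is enough for one half to be infinite, and we never need both. The hypothesis that $A_0,A_1$ are c.e.\ is not used; the equivalence holds for arbitrary sets.
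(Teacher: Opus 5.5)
Your proof is correct, but it takes a different route from the paper, which does not argue from the definition at all. For ($\Leftarrow$) the paper cites the result of \cite{Omanadze2019} that the join of two nowhere simple sets is nowhere simple. For ($\Rightarrow$) it notes $A_i \le_Q A_0\oplus A_1$ and invokes Proposition~3.5 of \cite{Omanadze2019}, under which nowhere simplicity passes down along $Q$-reductions.

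The paper's route places the lemma in that more general picture. However, it rests on external results and inherits their hypotheses, which is presumably why the lemma is stated only for c.e.\ sets. Your argument is self-contained and elementary. For ($\Rightarrow$) you need only the very special reduction $x\mapsto 2x+i$, a computable bijection onto a computable set, along which infinite c.e.\ subsets transfer trivially in both directions. For ($\Leftarrow$) the even/odd decomposition of $B\setminus(A_0\oplus A_1)$ and a case split suffice.

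Your remark that the c.e.\ hypothesis is never used is also right, since the paper's definition of nowhere simplicity applies to arbitrary sets.
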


\begin{proof}
In \cite{Omanadze2019} it is proved that if $A_0, A_1$ are nowhere simple, 
then $A_0 \oplus A_1$ is nowhere simple. Conversely, if $A_0 \oplus A_1$ is 
nowhere simple, since $A_0 \le_Q A_0 \oplus A_1$ and $A_1 \le_Q A_0 \oplus 
A_1$, it follows by Proposition 3.5 in \cite{Omanadze2019} that $A_0$ and 
$A_1$ are nowhere simple. 
\end{proof}

\begin{definition}
A set $A$ is said to be \emph{$bd$-reducible to a set $B$} (in symbols: $A 
\le_{bd} B$) if there exist a computable function $f$ and $n \in \omega$ such 
that for all $x$: 
\[
x \in A \iff D_{f(x)} \cap B \neq \varnothing 
\] 
and 
\[
\left|D_{f(x)}\right| < n. 
\]
\end{definition}
(The acronym $bd$ stands for \emph{bounded disjunctive reducible}: A set $A$ 
is \emph{disjunctive reducible} to a set $B$ if there exists a computable 
function $f$ such that $x \in A \iff D_{f(x)} \cap B \neq \varnothing$, for 
every $x$.) 
	
\begin{proposition}\label{proposition-1.4}
There exists a $bd$-complete nowhere simple set.
\end{proposition}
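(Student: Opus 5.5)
The plan is to take $K$ (or any creative set), use Shore's theorem that every c.e.\ degree contains a nowhere simple set to get a nowhere simple $C \equiv_T K$, and then cylinderize or join so that the result is $bd$-complete while remaining nowhere simple. Turing completeness alone does not give $bd$-completeness, so the cleanest route is to build the set directly: let $K$ be creative (hence $1$-complete, hence $bd$-complete) and consider $A = K \times \omega$, i.e.\ $A = \{\langle x, n\rangle : x \in K\}$. Clearly $K \le_1 A$ via $x \mapsto \langle x,0\rangle$, so $A$ is $bd$-complete, since $bd$-completeness is inherited upward along $\le_1$ and $K$ is $1$-complete.

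The main work is to check that $A$ is nowhere simple. Let $B$ be c.e.\ with $B\setminus A$ infinite. I split into two cases according to the projection $P=\{x : (\exists n)\,\langle x,n\rangle \in B\setminus A\}$.

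In the first case $P$ is infinite. Then $P$ is an infinite subset of $\overline K$, but we must produce a c.e.\ infinite subset of $B\setminus A$, and $\overline K$ is productive, so no infinite c.e.\ subset of $P$ exists in general. The product form alone therefore looks insufficient. I would repair this by instead taking $A = \{\langle x,n\rangle : x \in K_{n}\}$-style sets, or more simply by invoking known results: \cite{Omanadze2019} shows the join of nowhere simple sets is nowhere simple, and Shore's construction can be adapted. The expected cleanest choice is $A=\{\langle x,n\rangle : \varphi_x(x){\downarrow} \text{ in} \le n \text{ steps}\}$. Its complement in each column $x\in K$ is finite, and in each column $x\notin K$ it is the whole column. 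If $B\setminus A$ is infinite, then either infinitely many elements of $B\setminus A$ lie in columns $x\notin K$, or some column contains infinitely many of them, which is impossible because columns with $x\in K$ have finite complement. So suppose infinitely many $x\notin K$ have $\langle x,n\rangle\in B$ for some $n$. Enumerate $B$, and whenever $\langle x,n\rangle \in B$ appears, enumerate $\langle x,m\rangle$ for the first $m\ge n$ with $\langle x,m\rangle\in B$ and $\varphi_x(x)$ not converged by stage $m$. Along columns of $K$ this search may fail to find anything, which is harmless. Hence $A$ is not by itself nowhere simple, since $B$ could be sparse in columns. To fix this, I replace $A$ by the sum $A\oplus S$ of a nowhere simple set and exploit Lemma~\ref{lemma-1.3}.

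The final plan is therefore: take a nowhere simple set $N$ in the degree of $K$ (Shore), and form $A = N \oplus K'$, where $K'$ is a nowhere simple $bd$-complete coding. The obstacle is exactly producing one $bd$-complete nowhere simple set, since Lemma~\ref{lemma-1.3} then closes everything under joins. I would attempt it by a direct Shore-style priority construction of a c.e.\ $A$. The requirement for $bd$-completeness is to make $K\le_{bd}A$ via $x\mapsto\{\langle x,0\rangle,\langle x,1\rangle\}$. The nowhere simplicity requirements $R_e$ say: if $W_e\setminus A$ is infinite, then a computable subset $U_e\subseteq W_e$ is infinite and disjoint from $A$. Each $R_e$ restrains the elements it puts into $U_e$. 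Coding of $x\in K$ chooses whichever of its two points is not restrained by higher-priority $R_e$; only $e<x$ may restrain, and finite-injury counting ensures at most finitely many restraints interfere. This forces a larger bound $n$ such as $|D_{f(x)}|\le x$, so the bound being uniform is the delicate point. I would try making each $R_e$ claim at most one point per column, using columns of size $2$ and allowing restraint from only $e$ with $e$ smaller than a fixed priority. I expect this uniform-bound step to be the main obstacle.
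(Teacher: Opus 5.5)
There is a genuine gap: the proposal never produces a set that works.
\begin{itemize}
\item $K\times\omega$ is correctly rejected. The c.e.\ set $\{\langle x,0\rangle : x\in\omega\}$ meets its complement in a copy of $\overline{K}$, which has no infinite c.e.\ subset.
\item The set $\{\langle x,n\rangle : \varphi_x(x)$ halts within $n$ steps$\}$ is actually computable. So, contrary to what you claim, it \emph{is} nowhere simple. It is useless anyway, because it is not $bd$-complete.
\item The plan $N\oplus K'$ is circular, since $K'$ is exactly the object to be built.
\item The final priority construction stops at the point you yourself flag as the main obstacle, the uniform bound on $|D_{f(x)}|$.
\end{itemize}
So the existence claim is never established.

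The missing ingredient is Shore's \emph{splitting} theorem \cite[Theorem~2]{Shore1978}: every c.e.\ set is the union of two disjoint nowhere simple c.e.\ sets. You only invoke the degree version, which, as you note, cannot give $bd$-completeness. With the splitting theorem the paper's proof is immediate:
\begin{itemize}
\item Take a $bd$-complete c.e.\ set $A$, e.g.\ $K$, and split it as $A=A_0\cup A_1$ with $A_0\cap A_1=\emptyset$ and both $A_i$ nowhere simple.
\item Then $x\in A$ if and only if $\{2x,2x+1\}\cap(A_0\oplus A_1)\neq\emptyset$. So $A\le_{bd}A_0\oplus A_1$, and the join is $bd$-complete by transitivity.
\item Lemma~\ref{lemma-1.3} (the direction you already cite from \cite{Omanadze2019}) makes $A_0\oplus A_1$ nowhere simple.
\end{itemize}

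Your last attempt, coding $x\in K$ by one of the two points $\langle x,0\rangle,\langle x,1\rangle$, is a splitting of $K$ in disguise. The uniform-bound difficulty came only from letting requirements restrain points from entering the set altogether. In a splitting, the coding of $K$ always acts and puts $x$ into exactly one half. The nowhere simplicity requirements are stated for each half separately and only affect \emph{which} half receives $x$, so the bound stays $2$. The priority argument needed to meet those requirements is exactly what Shore's theorem supplies, so you can cite it rather than redo it.
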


\begin{proof}
Let $A$ be a c.e.\ $bd$-complete set. By \cite[Theorem~2]{Shore1978}, $A$ can 
be split into two disjoint nowhere simple sets $A_0, A_1$. Clearly $A 
\equiv_{bd} A_0 \oplus A_1$, and by the previous Lemma, $A_0 \oplus A_1$ is 
nowhere simple. 
\end{proof}

For the following proposition, and its proof, we recall the following two 
definitions. 

\begin{definition}
A set of numbers $A$ is \emph{semicomputable} if there is a two-variable 
computable function $f$ such that for every $x,y \in \omega$, we have 
$f(x,y)\in \{x,y\}$ and if $\{x,y\} \cap A\neq \emptyset$ then $f(x,y) \in 
A$. 
\end{definition}

And

\begin{definition}
A c.e. set $A$ is \emph{hypersimple} if $\overline{A}$ is infinite, and for 
every computable function $f$, such that $D_{f(x)}\cap D_{f(y)} =\emptyset$ 
whenever $x \ne y$, we have that $D_{f(x)} \subseteq A$ for at least one $x$. 
\end{definition}
(Any collection of finite sets $\{D_{g(x)}\}_x$, where $g$ is computable and 
the sets are pairwise disjoint is called a \emph{disjoint strong array}.) 

\begin{proposition}\label{proposition-1.5}
There exists a $Q$-complete hypersimple set.
\end{proposition}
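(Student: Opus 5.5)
We take $H$ to be Dekker's deficiency set of the halting set $K$. Since $K$ is $Q$-complete (every c.e.\ set is $1$-reducible, hence $Q$-reducible, to $K$) and $\le_Q$ is transitive, it suffices to show that $H$ is hypersimple and that $K \le_Q H$.

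Fix a one-to-one computable function $a$ whose range is $K$, and let
\[
H=\{s : (\exists t>s)\,[\,a(t)<a(s)\,]\}.
\]
$H$ is c.e., and its complement consists of the non-deficiency stages. Dekker's classical theorem says that $H$ is hypersimple, and we would quote it. The argument runs as follows. $\overline{H}$ is infinite, because the values $a(s)$ at non-deficiency stages form an increasing sequence that can be built greedily. Now suppose $\{D_{g(n)}\}_n$ is a disjoint strong array with $D_{g(n)}\not\subseteq H$ for every $n$. Then we can compute $K$ as follows. Given $x$, choose $n$ so large that every element of $D_{g(n)}$ is at least $x+1$. Some $s\in D_{g(n)}$ is non-deficient. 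By injectivity of $a$, the values $a(0),\dots,a(s)$ are $s+1$ distinct numbers, and since $a(s)$ is the least value from stage $s$ on, all values below $a(s)$ appear by stage $s$. Together with $s\ge x+1$, this gives that every element of $K$ that is $\le x$ appears among $a(0),\dots,a(\max D_{g(n)})$. This contradicts the noncomputability of $K$.

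The main step is the $Q$-reduction $K\le_Q H$. Define $W_{f(x)}$ by the following uniform enumeration. For $s=0,1,2,\dots$ in turn, compute $a(s)$. If $a(s)=x$, stop the enumeration forever. Otherwise, if $a(s)>x$, enumerate $s$ into $W_{f(x)}$. By the $s$-$m$-$n$ theorem, $f$ is computable. We check the two directions.
\begin{itemize}
\item If $x\in K$, say $x=a(t_x)$, then $W_{f(x)}=\{s<t_x : a(s)>x\}$. Each such $s$ satisfies $a(t_x)=x<a(s)$ with $t_x>s$, so $s\in H$. Hence $W_{f(x)}\subseteq H$.
\item If $x\notin K$, the enumeration never stops, so $W_{f(x)}=\{s : a(s)>x\}$. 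Take any non-deficiency stage $s$ with $s\ge x+1$; these exist because $\overline{H}$ is infinite. By the observation above, $a(s)\ge s\ge x+1$, and $s\notin H$. Hence $W_{f(x)}\not\subseteq H$.
\end{itemize}
Thus $x\in K \iff W_{f(x)}\subseteq H$, so $K\le_Q H$ and $H$ is $Q$-complete.

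The only delicate point is the second bullet: we need to guarantee a non-deficiency stage whose value exceeds $x$. This is handled by the counting fact that a non-deficiency stage $s$ satisfies $a(s)\ge s$, which holds because $a$ is injective.

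Alternatively, in the spirit of the definition just recalled, one could present $H$ via a semicomputable set: the complement $\overline{H}$ is closely related to the left cut of the c.e.\ real $\sum_{n\in K}2^{-n}$. The direct argument above, however, seems the cleanest route.
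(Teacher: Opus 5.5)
Your route differs from the paper's. The paper combines two citations: Dekker's existence of a $T$-complete semicomputable hypersimple set, and Marchenkov's lemma that for c.e.\ $A$ and semicomputable c.e.\ $B\ne\emptyset,\omega$, $A\le_T B$ implies $A\le_Q B$. You instead fix the deficiency set $H$ of $K$ and write down an explicit reduction $K\le_Q H$, so you only quote Dekker's hypersimplicity theorem. Your $W_{f(x)}$ is right: it equals $\{s<t_x : a(s)>x\}\subseteq H$ when $x=a(t_x)\in K$, and $\{s : a(s)>x\}$ when $x\notin K$. Since $H$ is itself semicomputable, your reduction is in effect a concrete instance of Marchenkov's lemma. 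Your version is more elementary and self-contained. The paper's is shorter and yields the general fact that every $T$-complete semicomputable c.e.\ set is $Q$-complete.

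However, the ``counting fact'' you call the only delicate point is false: a non-deficiency stage $s$ need not satisfy $a(s)\ge s$. If the least element $m$ of $K$ is enumerated at a stage $t>m$, then $t\notin H$ but $a(t)=m<t$, and injectivity of $a$ does nothing to prevent this. What is true is that $a$ is strictly increasing on $\overline{H}$. So if $s_0<s_1<\cdots$ lists $\overline{H}$, then $a(s_k)\ge k$; you have conflated the index $k$ with the stage $s_k$.

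In your second bullet the repair is immediate. Since $a$ is injective and $\overline{H}$ is infinite, some $s\in\overline{H}$ has $a(s)>x$, and that $s$ lies in $W_{f(x)}\smallsetminus H$.

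Your sketch of hypersimplicity, by contrast, fails as written. A single member $D_{g(n)}$ with all elements $\ge x+1$ gives a non-deficient $s\ge x+1$, but no bound $a(s)>x$. The standard argument uses $x+2$ members $D_{g(0)},\dots,D_{g(x+1)}$. By disjointness they contain $x+2$ distinct non-deficient stages, and the largest of these, $s$, satisfies $a(s)\ge x+1$. Hence every element of $K$ that is $\le x$ occurs among $a(0),\dots,a(m)$, where $m=\max\bigcup_{i\le x+1}D_{g(i)}$. Since you quote Dekker's theorem anyway, this error affects only the sketch, not the proof.
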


\begin{proof}
Dekker (see Theorem 9.XVI in \cite{Rogers1987}) proved there exists a 
$T$-complete semicomputable hypersimple set. Marchenkov 
\cite[Lemma~1]{Marchenkov1976} showed that if $B$ is a semicomputable c.e. 
set such that $B \neq \varnothing, \omega$ and $A$ is a c.e. set then $A 
\le_T B \Rightarrow A \le_Q B$. Thus, $Q$-complete hypersimple sets do exist. 
\end{proof}

As an application of Theorem~\ref{theorem-1.1}, 
Proposition~\ref{proposition-1.4} and Proposition~\ref{proposition-1.5}, we 
get:
\begin{theorem}\label{theorem-1.6}
There exists a c.e.\ set which is simultaneously $Q$-complete and 
$bd$-complete, but not $c$-complete. 
\end{theorem}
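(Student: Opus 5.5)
The set will be $H \oplus C$, where $H$ is the $Q$-complete hypersimple set from Proposition~\ref{proposition-1.5} and $C$ is the $bd$-complete nowhere simple set from Proposition~\ref{proposition-1.4}. Since $H$ and $C$ are c.e., so is $H \oplus C$.

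First I check both completeness properties. For every c.e.\ set $X$ we have $X \le_Q H$ via some $f$. Then $X \le_Q H\oplus C$ via a computable $g$ with $W_{g(x)} = \{2y : y \in W_{f(x)}\}$, so $H\oplus C$ is $Q$-complete. Similarly, $X \le_{bd} C$ via some $h$ with bound $n$. Replacing $D_{h(x)}$ by $\{2y+1 : y\in D_{h(x)}\}$ gives a $bd$-reduction of $X$ to $H\oplus C$ with the same bound, so $H \oplus C$ is $bd$-complete.

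Next I show that $H\oplus C$ is not $c$-complete. Fix a simple set $A$, e.g.\ Post's simple set. Suppose toward a contradiction that $A \le_c H \oplus C$. By Theorem~\ref{theorem-1.1}, with $B = H$ and the nowhere simple set $C$, we get $A \le_c H$. So it suffices to show that $H$ is not $c$-complete, i.e.\ that some c.e.\ set is not $c$-reducible to $H$. In fact this shows more: if $H\oplus C$ were $c$-complete then every simple set, hence $H$ itself, would reduce to it, and the same argument applies.

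The main obstacle is showing that a hypersimple set cannot be $c$-complete. My plan is to show that no simple but non-hypersimple c.e.\ set $A$ is $c$-reducible to a hypersimple set $H$; such an $A$ exists by Post's construction. Suppose $A \le_c H$ via $f$. Since $A$ is not hypersimple, there is a disjoint strong array $\{D_{g(n)}\}_n$ with every $D_{g(n)}$ meeting $\overline{A}$. For each $n$ put $E_n = \bigcup_{x\in D_{g(n)}} D_{f(x)}$. Since each $D_{g(n)}$ contains some $x\notin A$, and $D_{f(x)}\not\subseteq H$ for such $x$, every $E_n$ meets $\overline{H}$.

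The $E_n$ need not be disjoint, so I will thin them out computably to a disjoint strong array, which contradicts the hypersimplicity of $H$. Because $\overline{H}$ is infinite and $H$ is c.e., one can build the subsequence greedily: choose $n_{k+1}$ so that $E_{n_{k+1}}$ minus $\bigcup_{i\le k}E_{n_i}$ still meets $\overline{H}$, using that only finitely many elements of $\overline{H}$ are already covered. This step is not obviously effective. If it fails, I would fall back on the fact that the $x$ with $D_{f(x)}$ meeting the finite set $F=\bigcup_{i\le k}E_{n_i}\cap\overline{H}$ form a c.e.\ subset of $\overline{A}$. By simplicity of $A$ there are only finitely many such $x$, and the $D_{g(n)}$, being disjoint, eventually avoid them. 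Since $D_{g(n)}$ still meets $\overline{A}$, the set $E_n$ must then meet $\overline{H}$ outside $F$. I would use this to define a modified disjoint array, nonuniformly correcting by finite information as in the proof of Theorem~\ref{theorem-1.1}.
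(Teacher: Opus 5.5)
Your set $H\oplus C$ and your completeness checks are correct, and so is the use of Theorem~\ref{theorem-1.1} to turn $A\le_c H\oplus C$ into $A\le_c H$ for simple $A$. One slip: ``it suffices to show that $H$ is not $c$-complete'' holds only if you also have a $c$-complete simple set, which is what the paper uses. Without one you need a specific simple set not $c$-reducible to $H$, and that is what you then aim for.

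The genuine gap is the step you flag yourself: you never produce a \emph{computable} disjoint array. Write $U_k=\bigcup_{i\le k}E_{n_i}$. Whether $E_n\setminus U_k$ meets $\overline{H}$ is not decidable, and your fallback does not fix this. In Theorem~\ref{theorem-1.1} a single finite set $R$ is patched once. Here each step $k$ has its own finite exceptional set $U_k\cap\overline{H}$, and its own unknown threshold beyond which the $D_{g(n)}$ avoid the finitely many bad $x$. Infinitely many non-uniform parameters cannot be hard-wired into a computable array. So, as written, nothing contradicts the hypersimplicity of $H$.

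The missing idea is to use that $H$ is c.e. Your own observation shows that $E_n\cap U_k\subseteq H$ for all sufficiently large $n$, and this is a $\Sigma_1$ condition. So search for the first pair $(n,s)$ with $E_n\cap U_k\subseteq H_s$, and put $n_{k+1}=n$ and $G_{k+1}=E_n\setminus U_k$. The search terminates. Since $E_n$ meets $\overline{H}$ while $E_n\cap U_k\subseteq H$, the set $G_{k+1}$ meets $\overline{H}$. Together with $G_0=E_0$, these sets form a disjoint strong array all of whose members meet $\overline{H}$, which is the desired contradiction.

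With this repair your argument is a valid alternative to the paper's. The paper takes a $c$-complete simple set $S$, gets $S\le_c H$ from Theorem~\ref{theorem-1.1}, and appeals to the cited result that no hypersimple set is $wtt$-complete, hence not $c$-complete. Your route replaces both citations with a direct argument. The price is that you need a simple non-hypersimple set such as Post's.
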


\begin{proof}
Let $A$ be a $Q$-complete hypersimple set (such set exists by 
Proposition~\ref{proposition-1.5}), and let $B$ be a $bd$-complete nowhere 
simple set (such a set exists by Proposition~\ref{proposition-1.4}). Consider 
$A \oplus B$. This set is $Q$-complete and $bd$-complete. Suppose $A \oplus 
B$ is $c$-complete. Let $S$ be a $c$-complete simple set (such a simple set 
exists by Theorem 8.VIII in \cite{Rogers1987}). Then $S \le_c A \oplus B$. By 
Theorem~\ref{theorem-1.1}, $S \le_c A$, so $A$ is $c$-complete. However, no 
hypersimple set can be $wtt$-complete, let alone $c$-complete 
(\cite{Jockusch1969}). Contradiction. 
\end{proof}

\begin{definition}
A set $A$ is \emph{weak truth-table reducible} ($wtt$-reducible) to a set $B$ 
(in symbols: $A \leq_{wtt} B$) \cite{Rogers1987}, if $(\exists z)[\chi_A = 
\varphi_z^B]$ and there is a computable $f$ such that, for every $x$, the 
finite set $D_{f(x)}$ consists exactly of the numbers $y$ queried during the 
computation $\varphi_z^B(x)$, i.e., those numbers for which the oracle is 
asked to determine the value $B(y)$. Here, $\chi_A$ denotes the 
characteristic function of $A$. If, in addition, there exists $n$ such that 
$\left|D_{f(x)}\right| \leq n$ for all $x$, then $A \leq_{bwtt} B$. The 
notion of $bwtt$-reducibility was introduced by Lachlan \cite{Lachlan1975}. 
\end{definition} 

Lachlan \cite{Lachlan1975} proved that $bwtt$-complete sets are 
$btt$-complete (even $bd$-complete). Therefore, from 
Theorem~\ref{theorem-1.6}, we have: 

\begin{corollary}\label{cor:relationsh}
There exists a c.e.\ set which is simultaneously $Q$-complete and $bwtt$- 
($btt$-) complete but not $c$-complete. 
\end{corollary}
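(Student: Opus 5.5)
The plan is to take the very set built in the proof of Theorem~\ref{theorem-1.6} and check the extra completeness properties. Let $A$ be the $Q$-complete hypersimple set of Proposition~\ref{proposition-1.5} and $B$ the $bd$-complete nowhere simple set of Proposition~\ref{proposition-1.4}. We consider $A\oplus B$. Theorem~\ref{theorem-1.6} already shows that this set is $Q$-complete and not $c$-complete. So it remains to show that $A \oplus B$ is $bwtt$-complete and $btt$-complete.

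The key step is that $bd$-completeness passes upward to both of these notions. We do not need Lachlan's converse direction; we only need the trivial inclusions. First, $\le_{bd}\subseteq\le_{btt}$. Indeed, a $bd$-reduction $f$ with bound $n$ is a truth-table reduction that queries the at most $n$ elements of $D_{f(x)}$ and evaluates the disjunction of their membership. Second, $\le_{bd}\subseteq\le_{bwtt}$. The oracle machine computes $f(x)$, queries all elements of $D_{f(x)}$, and outputs $1$ iff one of them is in the oracle. The query set is exactly $D_{f(x)}$, which is given by a computable canonical index and has size $<n$, as the definition of $bwtt$ requires.

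Next, $B\le_{1}A\oplus B$ via $x\mapsto 2x+1$, and composing a one-one reduction with a $bd$-reduction gives a $bd$-reduction. Hence for every c.e.\ set $W$ we get $W\le_{bd}B\le_{bd}A\oplus B$, so $A\oplus B$ is $bd$-complete. By the two inclusions above, it is then also $btt$- and $bwtt$-complete.

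No real obstacle is expected. The only point to check carefully is that the query-set function demanded by the $wtt$ definition is computable, and this holds because it is literally $x\mapsto f(x)$. The paper's own route, citing Lachlan's result that $bwtt$-complete sets are $btt$-complete, is not needed in this direction.
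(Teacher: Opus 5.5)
Your proof is correct and is essentially the paper's argument: the corollary is read off from the set $A\oplus B$ of Theorem~\ref{theorem-1.6}. You are also right that only the trivial inclusions of $\le_{bd}$ into $\le_{btt}$ and into $\le_{bwtt}$ are needed. The paper's appeal to Lachlan's theorem (that $bwtt$-complete sets are $bd$-complete) goes in the converse direction and only shows that these completeness notions coincide for c.e.\ sets, so that the corollary is no weaker than Theorem~\ref{theorem-1.6}.
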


\begin{proof}
Immediate.
\end{proof}

\section{On the difference between $c_1$-reducibility and 
$c_{1,N}$-reducibility on c.e. sets}

It is well known that, on c.e. sets, $c$-reducibility and $c_1$-reducibility 
do not coincide. In fact, there are c.e. $c$-degrees which contain infinitely 
many $c_1$-degrees (see for instance \cite{ChitaiaOmanadze2022},  
\cite{ChitaiaOmanadzeSorbi2021} and \cite{ChitaiaOmanadzeSorbi2025}), and the 
poset of c.e. $c$-degrees is not elementarily equivalent to the poset of 
$c_1$-degrees (see~\cite{ChitaiaOmanadze2022}). In this section we show that
$c_1$-reducibility and $c_{1,N}$-reducibility do not coincide on c.e. sets. 

\begin{theorem}\label{theorem1x}
Let $A \le_{Q_1} B$, where $A$ and $B$ are c.e. sets. Then there exists a 
c.e. set $C$ such that: 
\[ 
A \le_{Q_{1,N}} C \le_1 B. 
\]
\end{theorem}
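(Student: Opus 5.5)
The plan is to take a $Q_1$-reduction $f$ from $A$ to $B$ and re-index its query set $U=\bigcup_x W_{f(x)}$ along a computable enumeration. The pulled-back reduction then queries every natural number, so its query set is trivially computable. Here I read $A\le_{Q_{1,N}} C$ by analogy with $c_{1,N}$: there is a $Q_1$-reduction $g$ from $A$ to $C$ with $\bigcup_x W_{g(x)}$ computable.

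\textbf{Step 1 (enumeration with labels).} Fix $f$ with $x\in A\iff W_{f(x)}\subseteq B$ and the $W_{f(x)}$ pairwise disjoint. Dovetail the enumerations of all $W_{f(x)}$ to get a computable, repetition-free listing $u_0,u_1,\dots$ of $U$. Each $u_n$ comes with the (unique, by disjointness) label $x_n$ such that $u_n\in W_{f(x_n)}$. The map $n\mapsto (u_n,x_n)$ is partial computable, and its domain $I$ is an initial segment of $\omega$. Hence $I$ is computable whether $U$ is finite or infinite, since any initial segment of $\omega$ is.

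\textbf{Step 2 (definitions in the main case).} Assume first that $U$ is infinite, so $I=\omega$ and $n\mapsto u_n$ is a total injective computable function. Put $C=\{n: u_n\in B\}$; it is c.e. because $B$ is, and $C\le_1 B$ via $n\mapsto u_n$. Let $g$ be computable with $W_{g(x)}=\{n: x_n=x\}$. These sets are pairwise disjoint, and their union is $\omega$, which is computable. For each $x$ we have $\{u_n : n\in W_{g(x)}\}=W_{f(x)}$, so $W_{g(x)}\subseteq C\iff W_{f(x)}\subseteq B\iff x\in A$. Thus $g$ is a $Q_{1,N}$-reduction from $A$ to $C$.

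\textbf{Step 3 (degenerate case; main obstacle).} The only real obstacle is the case where $U$ is finite, because then $n\mapsto u_n$ is not total and cannot directly serve as a $1$-reduction. In this case all but finitely many $W_{f(x)}$ are empty, so $A$ is cofinite and hence computable. My plan is to pad the construction: set $C=\{n\in I: u_n\in B\}$ and extend the $1$-reduction to $n\notin I$ by sending those $n$ injectively to elements that avoid $\{u_n:n\in I\}$ and have the right membership in $B$. This works only when $B$ (or $\overline B$) has infinitely many elements outside the finite set $U$.

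The residual subcases, where $B$ is finite or cofinite, force $C$ to be finite or cofinite accordingly. I would treat them by hand, choosing $C$ of the appropriate finite or cofinite shape so that $C\le_1 B$ still holds. Since $A$ is computable here, a $Q_{1,N}$-reduction of $A$ to such a $C$ can be written down directly: use the empty set for $x\in A$ and disjoint singletons outside $C$ for $x\notin A$. This requires $\overline C$ to be infinite, or only finitely many $x\notin A$, and either can be arranged because $A$ is cofinite. I expect this bookkeeping to be routine but fiddly, while Step 2 carries the actual content of the theorem.
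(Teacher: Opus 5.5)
Your Steps 1--2 are exactly the paper's argument. The paper takes a one-to-one computable $g$ with range $\bigcup_x W_{f(x)}$, sets $C=g^{-1}(B)$ and $W_{h(x)}=g^{-1}(W_{f(x)})$, and notes that $\bigcup_x W_{h(x)}=\omega$. The paper tacitly assumes $U=\bigcup_x W_{f(x)}$ is infinite, since otherwise no such $g$ exists. You are right that the finite case needs a word, but your Step 3 does not handle it correctly.

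\textbf{Where Step 3 fails.} With $C=\{n\in I: u_n\in B\}$, every $n\notin I$ lies outside $C$. So any $1$-reduction $p$ must map the cofinite set $\omega\setminus I$ computably and injectively into $\overline{B}$, and its image is then an infinite c.e.\ subset of $\overline{B}$. Having infinitely many elements of $\overline B$ outside $U$ is necessary for this but not sufficient, because effectiveness is the issue. A concrete counterexample: take $A=\omega$, all $W_{f(x)}=\emptyset$, and $B$ simple. Then $U=I=C=\emptyset$, and $\emptyset\not\le_1 B$. Yet $B$ is neither finite nor cofinite, so this case also escapes your residual subcases.

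\textbf{The fix.} Do not prescribe $C$ off $I$ at all. Since $U$ is finite, $\omega\setminus U$ is infinite and computable. So you can extend $n\mapsto u_n$ ($n\in I$) to a computable injection $p\colon\omega\to\omega$ and set $C=p^{-1}(B)$. Then:
\begin{enumerate}
\item $C$ is c.e.\ and $C\le_1 B$ via $p$;
\item the same $g$ still satisfies $W_{g(x)}\subseteq C\iff W_{f(x)}\subseteq B$, because $p(n)=u_n$ for $n\in W_{g(x)}\subseteq I$;
\item $\bigcup_x W_{g(x)}=I$ is finite, hence computable.
\end{enumerate}
This needs no case distinction on $B$ and no appeal to the computability of $A$. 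It also unifies both cases: in general take $C=p^{-1}(B)$ for any computable injection $p$ whose restriction to $I$ enumerates $U$.
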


\begin{proof}
Assume that $A$ and $B$ are c.e. sets and $A \le_{Q_1} B$ via $f$. Let $g$ be 
a one-to-one computable function such that 
\[ 
\text{rng}(g) = \bigcup_{x \in \omega} W_{f(x)}. 
\]
Consider the c.e. set $C = g^{-1}(B)$ and let $h$ be a computable function 
such that for all $x$, 
\[ W_{h(x)} = g^{-1}(W_{f(x)}). \]

Then, for all $x, y$:
\begin{itemize}
\item $x \in A \Leftrightarrow W_{h(x)} \subseteq C$,
\item $x \neq y \Rightarrow W_{h(x)} \cap W_{h(y)} = \emptyset$,
\item The set $\bigcup_{x \in \omega} W_{h(x)}$ is computable (in fact, 
    $\bigcup_{x \in \omega} W_{h(x)} = \omega$). 
\end{itemize}
Furthermore, $C \le_1 B$ via $g$. Thus, $A \le_{Q_{1,N}} C \le_1 B$.
\end{proof}

The following Proposition is a direct corollary of Proposition 3.1 of 
\cite{omanadze2015} 

\begin{proposition}\label{proposition}	
Let $A$ and $B$ be c.e. sets. If $B$ is an $r$-maximal set and $A 
\le_{Q_{1,N}} B$, then $A$ is an $r$-maximal set. 
\end{proposition}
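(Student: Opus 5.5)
Recall that a coinfinite c.e. set $B$ is $r$-maximal if for every computable set $R$, either $R\cap\overline{B}$ or $\overline{R}\cap\overline{B}$ is finite. Here $A\le_{Q_{1,N}}B$ via a computable $f$ such that:
\begin{itemize}
\item $x\in A \iff W_{f(x)}\subseteq B$;
\item the sets $W_{f(x)}$ are pairwise disjoint;
\item $R_f=\bigcup_x W_{f(x)}$ is computable.
\end{itemize}
I will verify the two clauses of $r$-maximality for $A$ directly.

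\emph{Coinfiniteness of $A$.} Suppose $\overline{A}$ is finite. Every $x\notin A$ has $W_{f(x)}\not\subseteq B$, so it contributes at least one element of $\overline{B}$ to $R_f$. For $x\in A$ we have $W_{f(x)}\subseteq B$. Hence $R_f\cap\overline{B}$ is finite, being covered by the finitely many sets $W_{f(x)}$, $x\notin A$. The nonobvious point is that some of these $W_{f(x)}$ may be infinite. I would handle this by splitting $R_f\cap\overline{B}$ into pieces and using disjointness together with computability of $R_f$. Coinfiniteness is in any case not the main issue, so I concentrate on the splitting condition.

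\emph{Splitting condition.} Let $S$ be a computable set, and suppose toward a contradiction that both $S\cap\overline{A}$ and $\overline{S}\cap\overline{A}$ are infinite. Define
\[
T=\bigcup_{x\in S}W_{f(x)}.
\]
By disjointness, $T\cap R_f$ and $R_f\smallsetminus T=\bigcup_{x\notin S}W_{f(x)}$ are both c.e. They partition the computable set $R_f$, so $T$ is computable.

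Since $S\cap\overline{A}$ is infinite, infinitely many $x\in S$ have $W_{f(x)}\cap\overline{B}\neq\emptyset$. Disjointness makes these witnesses distinct, so $T\cap\overline{B}$ is infinite. By the same argument, $(R_f\smallsetminus T)\cap\overline{B}$ is infinite. Then the computable set $T$ splits $\overline{B}$ into two infinite parts, since $R_f\smallsetminus T\subseteq\overline{T}$. This contradicts the $r$-maximality of $B$.

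\emph{Main obstacle: coinfiniteness.} I expect this step to be the real subtlety; the splitting argument is routine. To show $\overline{A}$ is infinite, I would try the following first. Apply $r$-maximality of $B$ to the computable set $R_f$. If $\overline{R_f}\cap\overline{B}$ is finite, then $\overline{B}\subseteq^* R_f$. Then, if $\overline{A}=\{x_1,\dots,x_k\}$, the set $\overline{B}$ is almost contained in $W_{f(x_1)}\cup\dots\cup W_{f(x_k)}$.

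Now use disjointness once more. Each $W_{f(x_i)}$ is c.e., and its complement within $R_f$ is c.e. as well, so each $W_{f(x_i)}$ is computable. By $r$-maximality, $\overline{B}$ is then almost contained in a single $W_{f(x_i)}$. This alone gives no contradiction, so here I would instead appeal to the cited Proposition~3.1 of \cite{omanadze2015} for the degenerate case. Alternatively, I would note that if the conventions force it, the convention that an $r$-maximal set is coinfinite may need $A$ coinfinite as a hypothesis. That hypothesis holds automatically when $A$ is noncomputable, and I would verify this case separately.
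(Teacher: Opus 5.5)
\textbf{Gap: coinfiniteness of $A$.} Your splitting argument is correct and complete. Disjointness gives $R_f\smallsetminus T=\bigcup_{x\notin S}W_{f(x)}$, so $T$ and $R_f\smallsetminus T$ are c.e.\ sets partitioning the computable set $R_f$, and therefore $T$ is computable. The distinct witnesses in $\overline{B}$ then make both $T\cap\overline{B}$ and $\overline{T}\cap\overline{B}$ infinite.

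The gap is the coinfiniteness of $A$, and no argument can close it. Your first inference (``covered by the finitely many $W_{f(x)}$, $x\notin A$, hence finite'') is invalid, as you noticed. The case where you get stuck, $\overline{B}\subseteq^* W_{f(x_i)}$, genuinely occurs. For any $B\neq\omega$, set $W_{f(0)}=\omega$ and $W_{f(x)}=\emptyset$ for $x>0$. These sets are pairwise disjoint with computable union $\omega$, and they witness $\omega\smallsetminus\{0\}\le_{Q_{1,N}}B$. Even more simply, $W_{f(x)}=\emptyset$ for all $x$ gives $\omega\le_{Q_{1,N}}B$. So, under the usual convention that $r$-maximal sets are coinfinite, the proposition as stated is false for cofinite $A$. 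Appealing to Proposition~3.1 of \cite{omanadze2015} ``for the degenerate case'' therefore cannot work. That result must carry a coinfiniteness (or noncomputability) hypothesis, or a convention to that effect.

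\textbf{What your argument proves.} You have proved the correct version: if $A$ is coinfinite, $A\le_{Q_{1,N}}B$, and $B$ is $r$-maximal, then $A$ is $r$-maximal. If $r$-maximality is read without the coinfiniteness clause, your splitting argument alone is the whole proof. Coinfiniteness holds automatically when $A$ is noncomputable, since cofinite sets are computable, so no separate verification is needed there: your splitting argument already covers every coinfinite $A$. This version is what the applications in Theorems~\ref{theorem2x} and~\ref{theorem3x} need, provided the non-$r$-maximal set chosen there is coinfinite (e.g.\ noncomputable).

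You should drop the coinfiniteness paragraph and state the hypothesis explicitly. With that change, your argument is a short, self-contained replacement for the paper's bare citation of \cite{omanadze2015}. It also makes clear that computability of $R_f$ is used only to make the pullback $T$ of a computable set computable.
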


\begin{proof}
Immediate from Proposition 3.1 of \cite{omanadze2015}.
\end{proof}

\begin{theorem}\label{theorem2x}
There exist c.e. sets $A, B, C$ such that $B$ is $r$-maximal, $A$ is not 
$r$-maximal, $C$ is not $r$-maximal, and $A \le_{Q_{1,N}} C <_1 B$. 
\end{theorem}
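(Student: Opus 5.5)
Fix an $r$-maximal set $B$ and a c.e.\ set $A$ that is not $r$-maximal but satisfies $A \le_{Q_1} B$. Then apply Theorem~\ref{theorem1x} to obtain a c.e.\ set $C$ with $A \le_{Q_{1,N}} C \le_1 B$. Proposition~\ref{proposition} immediately gives that $C$ is not $r$-maximal: if $C$ were $r$-maximal, then $A \le_{Q_{1,N}} C$ would force $A$ to be $r$-maximal. The strictness $C <_1 B$ also comes for free. If $B \le_1 C$, then since $\le_1 \subseteq \le_{Q_{1,N}}$, Proposition~\ref{proposition} applied to $B \le_{Q_{1,N}} C$ does not help directly. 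It is better to argue that $r$-maximality is invariant under $\equiv_1$, i.e.\ under computable permutations (Myhill), so $B\equiv_1 C$ would make $C$ $r$-maximal, a contradiction. Thus everything reduces to producing the pair $A,B$.

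To build $A \le_{Q_1} B$ with $B$ $r$-maximal and $A$ not, I would take $A = B\oplus B$, or more simply $A = \{x : \{2x, 2x+1\}\subseteq B'\}$ for a suitable $r$-maximal $B'$. The cleanest choice is to let $B$ be $r$-maximal and set $A=\{x : \{2x,2x+1\}\subseteq B\}$, which is $Q_1$-reducible (indeed $c_1$-reducible) to $B$ via $W_{f(x)}=\{2x,2x+1\}$, a disjoint family. The difficulty is to guarantee that $A$ is not $r$-maximal. For this I would choose $B$ so that its complement meets both the evens and the odds in a coordinated way. Concretely, take an $r$-maximal set $M$ and let $B=\{2x : x\in M\}\cup\{2x+1: x\in M\}$; but then $B\equiv$ a cylinder-like copy and $B$ would fail $r$-maximality (the evens split $\overline B$). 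So instead I would take $B=M$ itself and $A=\{x: \{2x,2x+1\}\subseteq M\}$, and arrange via the choice of $M$ that $\overline{A}=\{x: 2x\notin M \text{ or } 2x+1\notin M\}$ is split by the computable set $E=\{x : 2x\notin M\}$-approximations. That does not work automatically, so the main obstacle is exactly here.

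My first attempt at the hard step is a more flexible reduction: use disjoint sets $W_{f(x)}$ that are infinite c.e.\ sets, exploiting $Q$ rather than $c$. For instance, choose $A$ a c.e.\ set that is not $r$-maximal but $Q_1$-reducible to some $r$-maximal set. Known results (e.g.\ from \cite{omanadze2015} or the literature on $Q_1$-degrees) say that every c.e.\ set is $Q_1$-reducible to an $r$-maximal set of suitable degree. Failing a citation, I would construct $B$ directly by a priority argument: build $B$ $r$-maximal via the standard $e$-state construction on the complement, while simultaneously building $A$ and disjoint finite blocks $D_x$ with $x\in A\iff D_x\subseteq B$. To make $A$ not $r$-maximal, diagonalize so that a fixed computable set, say the evens, splits $\overline A$ into two infinite parts. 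This requires keeping, for infinitely many even $x$ and infinitely many odd $x$, some element of $D_x$ outside $B$, which is compatible with the $e$-state maximization because blocks of different parity can be given equal priority. Balancing these requirements is the step I expect to be delicate.
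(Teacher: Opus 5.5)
Your first paragraph matches the paper's argument exactly. You feed a pair $A\le_{Q_1}B$ into Theorem~\ref{theorem1x} and use Proposition~\ref{proposition} to see that $C$ is not $r$-maximal. You then get $B\not\le_1 C$ from Myhill's theorem together with invariance of $r$-maximality under computable permutations. The gap is the step you yourself single out: you never produce an $r$-maximal $B$ and a non-$r$-maximal c.e.\ set $A$ with $A\le_{Q_1}B$. This is the only non-routine ingredient. The paper gets it from two known results. First, there is a $Q$-complete $r$-maximal set (\cite{omanadze2002}, Corollary~2). Second, $Q$-completeness coincides with $Q_1$-completeness (\cite{GillMorris1974}). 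For such a $B$, \emph{every} c.e.\ set is $Q_1$-reducible to $B$, so any non-$r$-maximal c.e.\ $A$ will do. Your appeal to ``known results'' points this way, but it is neither stated precisely nor correctly sourced, so as written the proof is incomplete.

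Your concrete fallbacks do not fill the gap. The candidate $A=\{x:\{2x,2x+1\}\subseteq B\}$ fails for \emph{every} $r$-maximal $B$. The array $x\mapsto\{2x,2x+1\}$ is disjoint with union $\omega$, so $A\le_{Q_{1,N}}B$, and Proposition~\ref{proposition} forces $A$ to be $r$-maximal. Directly: a computable $X$ splitting $\overline A$ yields the computable set $\{2x,2x+1:x\in X\}$ splitting $\overline B$. The same obstruction undermines the priority sketch. Put $E_0=\bigcup_{x\text{ even}}D_x$ and $E_1=\bigcup_{x\text{ odd}}D_x$. Your requirements make both $E_0\cap\overline B$ and $E_1\cap\overline B$ infinite. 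If the union of the blocks were computable, then $E_0$ would be computable (its complement is $E_1$ plus the complement of the union, a c.e.\ set), and $E_0$ would split $\overline B$. So the construction must make the union of the blocks non-computable. It must also ensure that no computable set separates $E_0\cap\overline B$ from $E_1\cap\overline B$ modulo finite sets, while still meeting the $r$-maximality requirements. That is the real content of such a construction, and ``blocks of different parity can be given equal priority'' does not address it.
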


\begin{proof}
Let $B$ be a $Q_1$-complete $r$-maximal set (such a set exists by Corollary 2 
of \cite{omanadze2002} and the fact that $Q$-completeness and 
$Q_1$-completeness coincide \cite{GillMorris1974}) and let $A$ be a 
non-$r$-maximal c.e. set. Then $A \le_{Q_1} B$.  By Theorem~\ref{theorem1x}, 
there exists a c.e. set $C$ such that $A \le_{Q_{1,N}} C \le_1 B$. Since $A$ 
is not $r$-maximal and $A \le_{Q_{1,N}} C$, the set $C$ cannot be $r$-maximal 
by Proposition~\ref{proposition}. Since $C$ is not $r$-maximal , but $B$ is 
$r$-maximal, they are not recursively isomorphic, which means that 
$B\nleq_1C.$ Thus $A \le_{Q_{1,N}} C <_1 B.$ 
\end{proof}

\begin{theorem}\label{theorem3x}
The reducibilities $c_1$ and $c_{1,N}$  do not coincide on the c.e. sets.
\end{theorem}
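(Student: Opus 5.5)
The plan is to separate the two reducibilities using r-maximality, which is preserved downward along $c_{1,N}$-reductions but, I expect, not along arbitrary $c_1$-reductions. I will exhibit c.e.\ sets $A$ and $B$ with $B$ $r$-maximal, $A$ not $r$-maximal, and $A\le_{c_1}B$. Then $A\not\le_{c_{1,N}}B$, which proves Theorem~\ref{theorem3x}.

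\textbf{Step 1 (transfer to $Q_{1,N}$).} Every $c_{1,N}$-reduction is a $Q_{1,N}$-reduction. Given $f$ with $x\in A\iff D_{f(x)}\subseteq B$, pairwise disjoint $D_{f(x)}$, and $R_f$ computable, let $h$ be computable with $W_{h(x)}=D_{f(x)}$. The sets $W_{h(x)}$ are pairwise disjoint, and their union $R_f$ is computable. So Proposition~\ref{proposition} applies verbatim: if $B$ is $r$-maximal and $A\le_{c_{1,N}}B$, then $A$ is $r$-maximal.

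\textbf{Step 2 (a $c_1$-reduction into an $r$-maximal set from a non-$r$-maximal set).} Fix an $r$-maximal set $B$. Trivial constructions fail. For example, $B\oplus\omega\le_{c_1}B$ via $2x\mapsto\{x\}$ and $2x+1\mapsto\emptyset$, but its complement is a copy of $\overline B$ and so it is still $r$-maximal.

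I therefore plan to use queries of size two. Let
\[
A=\{x:\{2x,2x+1\}\subseteq B\}.
\]
The map $x\mapsto\{2x,2x+1\}$ is a $c_1$-reduction, since these sets are pairwise disjoint. Here the union of the query sets is all of $\omega$, which is computable, so Step~1 would forbid $A$ from being non-$r$-maximal. I must therefore change the construction so that the query sets cover a non-computable part of $\omega$.

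The corrected idea, modelled on Theorem~\ref{theorem1x} and Theorem~\ref{theorem2x}, is to work with a c.e.\ non-computable set $S$ of queries. Take pairwise disjoint finite sets $D_{f(x)}$ enumerated along a one-to-one computable enumeration of $S$, where $S$ is chosen so that $\overline B\cap S$ and $\overline B\smallsetminus S$ are both infinite. Then define $A$ as $\{x:D_{f(x)}\subseteq B\}$. Choosing $S$ so that $\overline A$ splits into two infinite parts along a computable set of indices (for instance, indices of queries landing in two disjoint halves of $S$) makes $A$ non-$r$-maximal. This is possible because $S$ itself is not computable, so the splitting of $\overline A$ does not pull back to a computable splitting of $\overline B$.

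\textbf{Main obstacle.} The hard step is Step~2: choosing $B$ and the query system so that $\overline A$ really admits an infinite/infinite computable split. An alternative I would try first is to cite a known result that some $r$-maximal set is $c_1$-hard for a non-$r$-maximal c.e.\ set. This is the analogue of the $Q_1$-complete $r$-maximal set used in Theorem~\ref{theorem2x}; for instance, it may be obtainable by a direct priority construction that builds $B$ $r$-maximal together with an explicit $c_1$-reduction from a fixed non-$r$-maximal set such as a $c$-complete simple set. Once such a pair exists, Step~1 finishes the argument.
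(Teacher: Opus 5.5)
\textbf{There is a genuine gap: Step~2 is never carried out.} Your Step~1 is exactly how the paper uses Proposition~\ref{proposition}, since a $c_{1,N}$-reduction is a $Q_{1,N}$-reduction. Your overall plan, a non-$r$-maximal c.e.\ set $c_1$-reducible to an $r$-maximal one, is also the paper's plan. But Step~2 carries all the content, and the ``corrected idea'' leaves it open: it does not say how $S$ and its two halves are obtained. The remark that $S$ is not computable is a heuristic, not an argument.

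More seriously, you start from an arbitrary $r$-maximal $B$, and for a maximal $B$ no construction of this kind can work. Suppose $f$ is a $c_1$-reduction of a coinfinite c.e.\ set $A$ to $B$, and $R$ is computable. Then $U_0=\bigcup_{x\in R}D_{f(x)}$ and $U_1=\bigcup_{x\notin R}D_{f(x)}$ are disjoint c.e.\ sets. Because the queries are pairwise disjoint, if $R$ split $\overline A$ into two infinite parts, both $U_0\cap\overline B$ and $U_1\cap\overline B$ would be infinite. For maximal $B$ this is impossible: $B\cup U_0$ would have to be cofinite, leaving only finitely many elements of $\overline B$ for $U_1$.

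What your sketch actually needs is an $r$-maximal $B$ that is \emph{not} maximal, together with disjoint c.e.\ sets $S_0,S_1$ each meeting $\overline B$ infinitely. Then define $g(2n)$ and $g(2n+1)$ as injective enumerations of $S_0$ and $S_1$. This gives $A=g^{-1}(B)\le_1 B$, with $2\omega$ splitting $\overline A$. Such $S_0,S_1$ do exist for an $r$-maximal non-maximal $B$: such a $B$ is not hyperhypersimple, and you can split a witnessing disjoint weak array into its even- and odd-indexed members. You would still have to prove or cite this. Your condition that $\overline B\cap S$ and $\overline B\smallsetminus S$ both be infinite is not the relevant one.

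\textbf{The missing step is already in the paper.} Theorem~\ref{theorem2x}, which you mention only as a model, provides c.e.\ sets $C<_1 B$ with $B$ $r$-maximal and $C$ not $r$-maximal. A $1$-reduction $g$ is a $c_1$-reduction via $x\mapsto\{g(x)\}$: these singletons are pairwise disjoint because $g$ is injective. Combined with your Step~1, this gives $C\le_{c_1}B$ and $C\not\le_{c_{1,N}}B$, which is exactly the paper's proof. As written, your fallback of citing an unspecified ``$c_1$-hard $r$-maximal set'' or an unperformed priority construction leaves the existence claim unproved.
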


\begin{proof}
By Theorem~\ref{theorem2x}, there exist c.e. sets $C$ and $B$ such that $C$ 
is not $r$-maximal, $B$ is $r$-maximal, and $C <_1 B$. Since $1$-reducibility 
implies $c_1$-reducibility, it follows that $C \le_{c_1} B$. On the other 
hand, by Proposition~\ref{proposition}, since $B$ is $r$-maximal and $C$ is 
not $r$-maximal, we have that $C \not\le_{c_{1,N}} B$. Therefore, there is a 
clear difference between $c_1$ and $c_{1,N}$ on the class of c.e. sets. 
\end{proof}

\section{$c_{1}$-completeness and $c_{1,N}$-completeness coincide.}

In this section we show that the notions of $c$-completeness, 
$c_1$-completeness and $c_{1,N}$-completeness coincide. Since we know already 
(\cite{Omanadze1976}) that $c$-completeness implies $c_1$ completeness, to 
prove our claim it is enough to show (see Theorem~\ref{thm:from-c1-to-c1N}) 
that $c_1$-completeness implies $c_{1,N}$-completeness. However, we prefer to 
precede Theorem~\ref{thm:from-c1-to-c1N} with a new proof (namely, the proof 
of Lemma~\ref{lem:from-c-to-c1}) of the fact that $c$-completeness implies 
$c_1$-completeness. We do this mainly for pedagogical reasons. First, the 
arguments in the two cases are similar and complement each other. Moreover, 
as far as we know, our proof of Lemma~\ref{lem:from-c-to-c1} gives a novel 
derivation of the fact that $c$-completeness implies $c_1$-completeness, 
introducing a technique that may be adaptable to new similar contexts in 
which, for a given reducibility $r$ and its ``injective'' version $r_1$, one 
aims to prove that $r$-completeness implies $r_1$-completeness. 

\begin{lemma}\cite{Omanadze1976}\label{lem:from-c-to-c1}
If $A$ is $c$-complete then $A$ is $c_1$-complete.
\end{lemma}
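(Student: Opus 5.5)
The plan is to reduce an arbitrary c.e.\ set $W$ to $A$ by a $c_1$-reduction. I will build an auxiliary c.e.\ set $V$ by the recursion theorem, so that the construction of $V$ may use a $c$-reduction of $V$ itself to $A$. The key fact is uniformity. Fix a $c$-reduction $k$ of $K$ to $A$. From an index $e$ one computably obtains a $1$-reduction of $W_e$ to $K$, and composing it with $k$ gives a $c$-reduction $f_e$ of $W_e$ to $A$, uniformly in $e$. So I define a c.e.\ set $W_{\sigma(e)}$ uniformly from $e$ (and from $W$), using $f_e$. Taking a fixed point $e$ with $W_{\sigma(e)}=W_e=:V$, I get that $f:=f_e$ is a $c$-reduction of $V$ to $A$, and the construction may freely consult $f$.

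The construction proceeds in stages $x=0,1,2,\dots$ and defines finite sets $E_x$ computably. Before stage $x$, let $F=\bigcup_{y<x}E_y$, which is a finite set that is already computed. Take $2^{|F|}+1$ fresh numbers $v$ that have not been used before. They have only $2^{|F|}$ possible traces $D_{f(v)}\cap F$, so by pigeonhole two of them, say $u\ne v$, satisfy $D_{f(u)}\cap F=D_{f(v)}\cap F$. This search is effective because $f$ is computable. Put $u$ into $V$ immediately, which sacrifices $u$. Declare that $v$ enters $V$ exactly when $x$ enters $W$. Finally set $E_x=(D_{f(v)}\smallsetminus F)\cup\{c_x\}$, where $c_x$ is an auxiliary element handled below. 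Then $V$ is c.e., and the index $\sigma(e)$ depends computably on $e$, as the recursion theorem requires.

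Verification. First suppose $x\in W$. Then $v\in V$, so $D_{f(v)}\subseteq A$, and hence $D_{f(v)}\smallsetminus F\subseteq A$. Now suppose $x\notin W$. Then $v\notin V$, so $D_{f(v)}\not\subseteq A$. On the other hand $D_{f(v)}\cap F=D_{f(u)}\cap F\subseteq A$, because $u\in V$. Hence the missing element lies in $D_{f(v)}\smallsetminus F$, and therefore $E_x\not\subseteq A$. Since $E_x$ is disjoint from $F$ by construction, the family $\{E_x\}$ is pairwise disjoint.

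The remaining technical points concern emptiness and the auxiliary element $c_x$. The sets $E_x$ should be nonempty, which matters only when $x\in W$, and new elements must never collide with later sets. To handle this, I choose $c_x$ as an element known to lie in $A$ and not in $F$: I put one further fresh number $w$ into $V$, which forces $D_{f(w)}\subseteq A$, and then pick $c_x\in D_{f(w)}\smallsetminus F$. I expect this to be the main obstacle. One must check that such sets $D_{f(w)}$ cannot all be contained in $F$. Since $A$ is infinite (it is $c$-complete), if some $D_{f(w)}\not\subseteq F$ then an element of $A$ outside $F$ is eventually found. Otherwise I argue by pigeonhole over more candidates that the elements of $V$ cannot all be coded inside the fixed finite set $F$, since that would make $V$ decidable on the fresh numbers. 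Alternatively, if $D_{f(v)}\smallsetminus F=\varnothing$ happens only when $x\in W$, I simply accept the empty set. Either way, $W\le_{c_1}A$ via $x\mapsto E_x$, which proves the lemma.
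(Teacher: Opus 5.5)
Your proof is correct, but it reaches the lemma by a different mechanism than the paper. The paper avoids the recursion theorem by running strategies $\mathcal{I}_e$ for all $e$ on a uniform family $V_e$ with $\bigoplus_e V_e\leq_c A$. It codes $B$ directly ($V_e(y)=B(y)$) and takes $D_{\widehat f_e(y)}=D_{f_e(y)}\smallsetminus\bigcup_{z<y}D_{f_e(z)}$. That set difference is not automatically correct for non-members, so the strategy watches $A_s$. As soon as some $y\notin V_{e,s}$ has its reduced set inside $A_s$, or the difference is empty, it enumerates $[0,y)$ into $V_e$ and abandons, diagonalizing against $\phi_e$. Correctness for the true index comes from the fact that this never happens there.

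You instead fix one total $c$-reduction $f$ of the very set $V$ you build, via $K\leq_c A$ and the recursion theorem. Your pigeonhole twin $u\in V$, with the same trace on $F$, certifies in advance that $D_{f(v)}\cap F\subseteq A$. So any failure of $D_{f(v)}\subseteq A$ is forced to appear in $D_{f(v)}\smallsetminus F$. Consequently your reduction $x\mapsto E_x$ is defined without ever consulting $A$ or $W$, and no abandonment is needed. The price is the fixed-point argument and sacrificed coding locations; this delayed-coding style is closer to the paper's proof that $c_1$-completeness implies $c_{1,N}$-completeness. In exchange, the paper's version codes $B$ without waste and is offered as a template for other pairs $r$, $r_1$.

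Your discussion of $c_x$ is the one weak spot. The search for a $w\in V$ with $D_{f(w)}\not\subseteq F$ is not justified as written. You enumerate every candidate $w$ into $V$ yourself, so nothing prevents $f$ from sending all of them into $F\cap A$, and your decidability remark does not apply.

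This part is dispensable, however. The paper's definition of a $c_1$-reduction asks only for pairwise disjointness, so empty $E_x$ cause no harm. If you do want nonempty sets, there are two easy fixes:
\begin{enumerate}
\item Enumerate the infinite c.e.\ set $A$ until an element outside $F$ shows up, and take that as $c_x$.
\item Keep $v$ out of $V$ whenever $D_{f(v)}\subseteq F$. At the fixed point this cannot occur, since it would give $D_{f(v)}=D_{f(u)}\cap F\subseteq A$ with $v\notin V$. This is the analogue of Case~(i) in the paper's construction.
\end{enumerate}
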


\begin{proof}
Let $A$ be a given c.e. $c$-complete set, and let $B$ be any c.e. set: we 
want to show tat $B \leq_{c_1} A$. 

Let us construct a family $\{V_e: e \in \omega\}$ of uniformly c.e. sets 
(i.e., $V_e=W_{k(e)}$ for some computable function $k$) and consider 
\[
W=\bigoplus_{e \in \omega} V_e\left( = \bigcup_e \{e\}\times V_e\right).
\] 
Since $W$ is c.e. and $A$ is $c$-complete, we have that $W\leq_c A$. For each 
$e$, we describe a strategy $\mathcal{I}_e$ which uniformly builds a c.e set 
$V_e$ and a corresponding partial (in fact finite or total) computable 
function $\widehat{f}_e$, such that if $\phi_e$ is a $c$-reduction from $W$ 
to $A$ then $\widehat{f}_e$ is a $c_1$-reduction from $B$ to $A$.  

\begin{remark}
We will work with computable approximations $\{\phi_{e,s}\}_{e,s\in \omega}$ 
to the partial computable functions, such that $\phi_{e,0}(x)$ is undefined 
for every $x$: see \cite[p.17]{Soare:Book}. To simplify notation, we use the 
symbol $f_e$ to denote the partial function $f_e(y)=\phi_e(\langle e, y 
\rangle)$, and accordingly $f_{e,s}(y)=\phi_{e,s}(\langle e, y \rangle)$. 
Notice that if $\phi_e$ is a $c$-reduction from $W$ to $A$, then $f_e$ is a 
$c$-reduction from $V_e$ to $A$. We will also use computable finite 
approximations  $\{A_s\}_{s \in \omega}$ and $\{B_s\}_{s \in \omega}$ to the 
c.e. sets $A,B$ (in the usual meaning of \cite{Soare:Book}). Sets of numbers 
will often be identified with their characteristic functions. Therefore, if 
$X$ is a set of numbers and $y$ is a number, then $X(y)$ denotes the value of 
the characteristic function of $X$ at $y$. 
\end{remark} 

\paragraph{\textbf{The strategy $\mathcal{I}_e$ and the definitions of $V_e$ 
and $\widehat{f}_e$}.}

The strategy $\mathcal{I}_e$ builds a c.e. set $V_e$, under the assumption 
that $\phi_e$ is a $c$-reduction from $W$ to $A$ and thus $f_e$ is a 
$c$-reduction from $V_e$ to $A$. By suitably coding $B$ into $V_e$, we try to 
exploit this $c$-reduction to obtain a $c_1$-reduction $\widehat{f}_e: V_e 
\leq_{c_1} A$, so that by the coding of $B$ into $V_e$ we achieve $B 
\leq_{c_1} A$. If at some stage we see that our attempts at making 
$\widehat{f}_e$ a suitable $c_1$-reduction fail, then our definition of $V_e$ 
will force $\phi_e$ not to be a $c$-reduction from $W$ to $A$. In this case 
we say that we \emph{abandon} the strategy $\mathcal{I}_e$, and we never 
again modify the parameters of the strategy. Since in the end there must be 
some $e$ such that $\phi_e$ is a $c$-reduction from $W$ to $A$, then, for 
this $e$, we have that $V_e \leq_{c_1} A$ and thus $B \leq_{c_1} A$. 

The strategy $\mathcal{I}_e$ works with three parameters, namely $a_e(s)$ (a 
number), $\widehat{f}_{e,s}$ (a finite function), and $V_{e,s}$ (a finite 
set). The parameters $\widehat{f}_{e,s}$ and $V_{e,s}$ will be finite 
approximations in the usual way to a partial (in fact finite or total) 
computable function $\widehat{f}_e$ and a c.e.\ set $V_e$. If $\mathcal{I}_e$ 
has not as yet been abandoned by stage $s$, then at stage $s+1$ the strategy 
updates its parameters for which the following inductive assumptions hold: 

\begin{enumerate}
\item $\dom(\widehat{f}_{e,s})=[0,a_e(s))$, and $[0,a_e(s)) \subseteq 
    \dom(f_{e,s})$; 
\item $V_{e,s}(y)=B_s(y)$, for every $y< a_e(s)$. 
\end{enumerate}
At this stage $s+1$ we also decide whether or not to abandon $\mathcal{I}_e$. 

\medskip
Stage $0$) Let $a_e(0)=0$ and $V_{e,0}=\widehat{f}_{e,0}=\emptyset$.

\medskip 

Stage $s+1$)  If we have already decided at a previous stage to abandon 
$\mathcal{I}_e$, then do nothing and  leave the values of the parameters for 
$\mathcal{I}_e$ at $s+1$ to be the same as at stage $s$. Otherwise, 
\begin{enumerate}

\item If we see that there exists some $y< a_e(s)$ such that $y \notin 
    V_{e,s}$ and $D_{\widehat{f}_{e,s}(y)} \subseteq A_s$, then put all $z 
    < y$ into $V_e$ by letting  $V_{e,s+1}= V_{e,s} \cup [0,y)$, and 
    abandon $\mathcal{I}_e$ (hence, $y \notin V_e$).

\item Otherwise, if $f_{e,s}(a_e(s))$ is still undefined then go to next 
    stage, and leave the values of the parameters at $s+1$ to be the same 
    as at stage $s$. 

\item Otherwise, define $a_e(s+1)=a_e(s)+1$, and distinguish the following 
    two cases: 

\medskip

Case~(i): $D_{f_{e}(a_e(s))} \subseteq \bigcup_{y < a_e(s)} D_{f_{e}(y)}$. 
In this case, put each $y < a_e(s)$ into $V_e$ by letting $V_{e,s+1}= 
V_{e,s} \cup [0,a_e(s))$, abandon $\mathcal{I}_e$ (hence, $a_e(s) \notin 
V_e$).

\medskip Case~(ii): Otherwise, let $\widehat{f}_e(a_e(s))$ be such that
\[
D_{\widehat{f}_e(a_e(s))} = D_{f_e(a_e(s))} \smallsetminus 
\bigcup_{y < a_e(s)} D_{f_e}(y).
\]

\end{enumerate}
(Updating of $V_e$.) If we have not abandoned $\mathcal{I}_e$, then at the 
end of stage $s+1$ update $V_{e,s+1}(y)=B_s(y)$ for every $y< a_e(s+1)$. 

\medskip

\paragraph{\textbf{Verification}.} 

Since $W \leq_c A$, let $e$ be such that $\phi_e$ is a total computable 
function which $c$-reduces $W$ to $A$. We claim in this case that the 
strategy $\mathcal{I}_e$ is never abandoned. Otherwise there is some $y$ 
(coming from Case (1), or $y=a_e(s)$, for some $s$, coming from Case (3(i)) 
of the construction), such that $\langle e, y \rangle \notin W$ (since $y 
\notin V_e$) but $D_{\phi_e(\langle e, y\rangle)} \subseteq A$, contradicting 
that $f_e$ is a $c$-reduction. Indeed, to see that $D_{\phi_e(\langle e, 
y\rangle}) \subseteq A$, we first notice that in both cases $\bigcup_{z < y} 
D_{\phi_e(\langle e, z\rangle)} \subseteq A$ being $[0,y) \subseteq V_e$ and 
thus  $\{e\} \times [0,y) \subseteq W$. On the other hand, in Case (1) since 
$D_{\widehat{f}_e(y)}= D_{f_e(y)} \smallsetminus \bigcup_{z < y} 
D_{f_e(z))}$, we have 
\[ D_{\phi_e(\langle e, y\rangle)}=D_{f_e(y)} \subseteq \bigcup_{z < y} 
D_{f_e(z))} \cup D_{\widehat{f}_e(y} \subseteq A. 
\]
In Case (3i) we have 
\[
D_{\phi_e(\langle e, y\rangle)}= D_{f_e(y)} \subseteq 
\bigcup_{z < y} D_{f_e(z))} \subseteq A.
\] 

We now show that $\widehat{f}_e$ is a $c$-reduction from $B$ to $A$. If $y\in 
B$ then $y \in V_e$ by the step-by-step updating of $V_{e,s}$. Therefore the 
pair $\langle e, y\rangle$ lies in $W$ and thus $D_{\phi_e(\langle e, y 
\rangle)}=D_{f_e(y)} \subseteq A$, but then $D_{\widehat{f}_e(y)}\subseteq A$ 
since $D_{\widehat{f}_e(y)} \subseteq D_{f_e(y)}$. If now $y\notin B$ then $y 
\notin V_e$ since $\mathcal{I}_e$ cannot force $y \in V_e$ without being 
abandoned, but $\mathcal{I}_e$ is never abandoned as we have already seen. On 
the other hand, it cannot be $D_{\widehat{f}_e(y)} \subseteq A$, otherwise at 
some stage  $y$ would be a candidate for abandoning $\mathcal{I}_e$. 

In the end we need to show that $\widehat{f}_e$ is in fact a $c_1$-reduction, 
i.e.\ it satisfies $D_{\widehat{f}_e(x)} \cap D_{\widehat{f}_e(y)}=\emptyset$ 
if $x \ne y$. But this follows from the fact that by Case ii) of the 
construction, for every $x$ we have  $D_{\widehat{f}_e(x)} \subseteq 
D_{f_e(x)}$, but $D_{\widehat{f}_e(x)} \cap \bigcup_{y<x}D_{f_e(y))} = 
\emptyset$, and thus $D_{\widehat{f}_e(x)}\cap D_{\widehat{f}_e(y)} = 
\emptyset$ if $x>y$. 
\end{proof}

\begin{theorem}\label{thm:from-c1-to-c1N}
If $A$ is $c_1$-complete then it is $c_{1,N}$-complete.
\end{theorem}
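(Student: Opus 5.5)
The plan is to reuse the uniform construction from the proof of Lemma~\ref{lem:from-c-to-c1}, now starting from a $c_1$-reduction. Let $A$ be $c_1$-complete and $B$ any c.e.\ set. We build uniformly c.e.\ sets $V_e$ and put $W=\bigoplus_e V_e$. Then $W\le_{c_1} A$ via some $\phi_e$, and $f_e(m)=\phi_e(\langle e,m\rangle)$ is a $c_1$-reduction from $V_e$ to $A$. The strategy for $e$ will be to \emph{designate} indices $m_0<m_1<\cdots$ and code $B$ by
\[
m_y\in V_e \iff y\in B ,
\]
while every non-designated index is kept out of $V_e$ forever. We then set $D_{\widehat f_e(y)}=D_{f_e(m_y)}$. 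Disjointness of the sets $D_{\widehat f_e(y)}$ is inherited from $f_e$, and $y\in B\iff m_y\in V_e\iff D_{f_e(m_y)}\subseteq A$. So $\widehat f_e$ is automatically a $c_1$-reduction from $B$ to $A$ whenever it is total and $f_e$ is a genuine $c_1$-reduction.

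The extra work goes into making the query set $R_{\widehat f_e}$ computable. Write $b_{-1}=-1$ and $b_y=\max D_{f_e(m_y)}$. We choose $m_y$ so that $D_{f_e(m_y)}$ is nonempty and $\min D_{f_e(m_y)}>b_{y-1}$. The query sets then occupy consecutive disjoint intervals $[\min D_{f_e(m_y)},\,b_y]$ with $b_y$ strictly increasing. To decide whether $z\in R_{\widehat f_e}$, compute $m_0,m_1,\dots$ until some $b_y\ge z$. Then $z\in R_{\widehat f_e}$ iff $z\in D_{f_e(m_{y'})}$ for some $y'\le y$, which is a finite check. The search for $m_y$ is: look, by dovetailing, for a fresh index $m>m_{y-1}$ with $f_e(m)$ defined, $D_{f_e(m)}\neq\emptyset$ and $\min D_{f_e(m)}>b_{y-1}$, and designate the first one found.

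The main obstacle is to show that this search terminates for the correct $e$, and this is where the undesignated indices are used. Since they never enter $V_e$, a correct $c_1$-reduction must have $D_{f_e(m)}\not\subseteq A$ for each such $m$, and in particular $D_{f_e(m)}\neq\emptyset$. (Designated indices $m_y$ are chosen nonempty by construction.) So, for correct $e$, all but possibly finitely many fresh indices have nonempty query sets. By pairwise disjointness, only finitely many $m$ can have $D_{f_e(m)}$ meeting $[0,b_{y-1}]$. Hence some fresh $m$ satisfies the requirement and the search halts, making $\widehat f_e$ total. For incorrect $e$ the search may diverge, but that does no harm: we only need some $e$ that works. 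No explicit abandonment is required, though one could add it, for instance by stopping once a non-designated index with empty query set is seen.

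Finally, one must check that the coding of $V_e$ is uniformly c.e. We enumerate $m_y$ into $V_e$ once $m_y$ has been designated and $y$ has appeared in $B$, and we enumerate nothing else. This holds, so $W$ is c.e.\ and the argument goes through. Taking $e$ with $\phi_e$ a $c_1$-reduction of $W$ to $A$ then gives $B\le_{c_{1,N}}A$ via $\widehat f_e$.
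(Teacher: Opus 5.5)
Your proposal is correct and follows essentially the paper's route. You designate coding locations whose query sets lie strictly above all previously chosen ones, keep every other index out of $V_e$, and set $D_{\widehat f_e(y)}=D_{f_e(m_y)}$; your explicit requirement $D_{f_e(m)}\neq\emptyset$, justified by the fact that undesignated indices stay out of $V_e$, makes precise a point that the paper's termination and decidability arguments leave implicit.
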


\begin{proof}
Let $A$ be a $c_1$-complete c.e. set, and let $B$ be any c.e. set. We keep 
the notations and the terminology of the previous proof.

Uniformly in $e$ we will define a c.e. set $V_e$, and a partial (in fact, 
finite or total) computable function $\widehat{f}_e$. 

\paragraph{\textbf{The strategy $\mathcal{I}_e$ and the definitions of 
$V_{e}$ and $\widehat{f}_e$}.}

The strategy $\mathcal{I}_e$ defining $V_{e}$ and $\widehat{f}_e$ is similar 
to the one in the proof of Lemma~\ref{lem:from-c-to-c1}. A slight 
complication is the delayed coding of $B$ into $V_e$. Suppose that the 
strategy has defined $\widehat{f}_e(y)$ for all $y<x$, and has already coded 
into $V_e$ the values $B(y)$ for all $y<x$. In order to define 
$\widehat{f}_e(x)$ and code $B(x)$ into $V_e$, the strategy waits for a 
number $a$ such that $\min(D_{f_e(a)})> \max \left(\bigcup_{y< x} 
D_{\widehat{f}_e(y)}\right)$, defines $\widehat{f}_e(x)= f_e(a)$, and makes 
the number $a$ to be the coding location $b_e(x)$ of $x$ in $V_e$. If the 
coding is more complicated, the rest of the construction is simpler than the 
one in the proof of Lemma~\ref{lem:from-c-to-c1}, since, as we will see, 
nothing goes wrong and there is ever no need to abandon $\mathcal{I}_e$, and 
make $b_e(y) \in V_e$ if $y \notin B$. 

\medskip
At stage $s$ we have this time the following parameters for $\mathcal{I}_e$: 
$a_e(s)$, $x_e(s), V_{e,s}, b_{e,s}$. As we see, with respect to the proof of 
the previous lemma, we have two additional parameters, namely a number 
$x_e(s)\leq a_e(s)$, and a finite function $b_{e,s}$.   

\medskip
Stage $0$) Let $V_{e,0}=\widehat{f}_{e,0}= b_{e,0}=\emptyset$, 
$x_e(0)=a_e(0)=0$. 

\medskip 

Stage $s+1$) The strategy updates its parameters for which the following 
assumptions inductively hold: 

\begin{enumerate}
\item $\dom(\widehat{f}_{e,s})=\dom(b_e(s))=[0, x_e(s))$ and $[0, a_e(s)) 
    \subseteq  \dom(f_{e,s})$; 
\item $V_{e,s}(b_{e,s}(y))=B_s(y)$, for every $y< x_e(s)$, and 
    $V_{e,s}(x)=0$ otherwise. 
\end{enumerate} We distinguish the following cases: 
\begin{enumerate}

\item If $f_{e,s}(a_e(s))$ is still undefined then go to next stage, and 
    leave the values of the parameters for $\mathcal{I}_e$ at $s+1$ to be 
    the same as at stage $s$. 

\item If $f_{e,s}(a_e(s))$ converges, and  
\begin{equation}\label{eqn:1}
\tag{*}
\min(D_{f_e(a_e(s))})> \max \left(\bigcup_{y< x_e(s)} 
D_{\widehat{f}_e(y)}\right)
\end{equation}
then let 
\[
\widehat{f}_e(x_e(s))=f_e(a_e(s)),
\] 
and define $x_e(s+1)=x_e(s)+1$, $a_e(s+1)=a_e(s)+1$. Code $B_s(x)$ (where 
for simplicity we write $x=x_e(s)$) into $V_e$, by defining 
$V_{e,s+1}(a_e(s))=B_s(x)$: we thus view $a_e(s)$ as the coding location of 
$B(x)$ in $V_e$, and we record this by defining $b_{e,s}(x)=a_e(s)$. 

\item Otherwise let $a_e(s+1)=a_e(s)+1$, leave the values of the other 
    parameters for $\mathcal{I}_e$ at $s+1$ to be the same as at stage $s$ 
    and go to the next stage. 
\end{enumerate} 

(Updating of $V_e$.) Finally, update $V_{e,s+1}(b_{e,s}(y))=B_s(y)$, for 
every $y< x_e(s)$. 

\medskip

\paragraph{\textbf{Verification}.} 
Since $W \leq_{c_1} A$, let $\phi_e$ be a total computable function which 
$c_1$-reduces $W$ to $A$. By injectivity of the $c_1$-reduction $\phi_e$ 
(i.e. $D_{\phi_e(x)} \cap D_{\phi_e(y)}=\emptyset$ if $x \ne y$) sooner or 
later our waits for desired numbers $a$ satisfying (\ref{eqn:1}) will  end. 
Therefore $\widehat{f}_e$ and $b_e$ are total.

If $y\in B$ then $b_e(y) \in V_e$ and therefore the pair $\langle e, b_e(y) 
\rangle$ lies in $W$, so that $D_{f_e(b_e(y))} \subseteq A$. Since 
$D_{\widehat{f}_e(y)} = D_{f_e(b_e(y))}$, we have $D_{\widehat{f}_e(y)} 
\subseteq A$. 

Suppose now that $y\notin B$. In this case we have that $D_{\widehat{f}_e(y)} 
\not \subseteq A$, since $D_{\widehat{f}_e(y)}= D_{f_e(b_e(y))}$ and $b_e(y) 
\notin V_e$, so that $D_{f_e(b_e(y))} \nsubseteq A$. 

It follows that $\widehat{f}_e$ is a total computable function which 
$c$-reduces $B$ to $A$. Injectivity of the reduction (making $\widehat{f}_e$ 
a $c_1$-reduction) is immediate since it follows by (\ref{eqn:1}) that
\[
x>y \Rightarrow \min(D_{\widehat{f}(x)}) > \max(D_{\widehat{f}(y)})
\]
since $D_{\widehat{f}_e(y)}$ is one of the addenda of the union in 
(\ref{eqn:1}). 

Finally we show that the range of $\widehat{f}_e$ is decidable. To decide 
whether $z\in \bigcup_x D_{\widehat{f}_e(x)}$, compute the least $x$ such 
that $z < \min(D_{\widehat{f}_e(x)})$. Then $z\in \bigcup_x 
D_{\widehat{f}_e(x)}$ if and only if $z \in \bigcup_{y < x} 
D_{\widehat{f}_e(y)}$.   

\end{proof}	

\bibliographystyle{amsplain}
\bibliography{completeness-c-c1N}

\end{document}